\newcommand{\br}[3]{{$#1$}$\lower4pt\hbox{$\tp\atop\raise4pt \hbox{$\scriptscriptstyle{#2}$}$} ${$#3$}}
\newcommand{\tw}[3]{{$#1$}${\,\scriptscriptstyle {#2}}\atop\raise9pt\hbox{$\scriptstyle\tp$} ${$#3$}}
\newcommand{\ttps}[2]{{#1}\raise5pt\hbox{$\lower12pt\hbox{$\scriptstyle\tp$}\atop \lower0pt\hbox{$\tilde\;$}$}\raise4.5pt\hbox{${\scriptstyle{#2}}$}}
\newcommand{\st}[1]{\mbox{${\,\scriptscriptstyle {#1}}\atop\raise5.5pt\hbox{$*$}$}}
\newcommand{\rd}[1]{\mbox{${\,\scriptscriptstyle {#1}}\atop\raise5.5pt\hbox{$\bullet$}$}}
\newcommand{\rt}[1]{\otimes_\chi}
\newcommand{\lt}[1]{\mbox{${\,\scriptscriptstyle {#1}}\atop\raise5.5pt\hbox{$\ltimes$}$}}
\newcommand{\btr}{\raise1.2pt\hbox{$\scriptstyle\blacktriangleright$}\hspace{2pt}}
\newcommand{\btl}{\raise1.2pt\hbox{$\scriptstyle\blacktriangleleft$}\hspace{2pt}}
\newcommand{\lcr}{\raise1.0pt \hbox{${\scriptstyle\rightharpoonup}$}}
\newcommand{\rcr}{\raise1.0pt \hbox{${\scriptstyle\leftharpoonup}$}}
\newcommand{\ttp}{{\lower12pt\hbox{$\tp$}\atop \hbox{$\tilde\;$}}}
\newcommand{\id}{\mathrm{id}}
\newcommand{\Hc}{\mathcal{H}}
\newcommand{\Sc}{\mathcal{S}}
\renewcommand{\S}{\mathcal{S}}
\newcommand{\Ru}{\mathcal{R}}
\newcommand{\Cc}{\mathcal{C}}
\renewcommand{\O}{\mathcal{O}}
\newcommand{\C}{\mathbb{C}}
\newcommand{\Qbb}{\mathbb{Q}}
\newcommand{\Z}{\mathbb{Z}}
\newcommand{\N}{\mathbb{N}}
\newcommand{\tp}{\otimes}
\newcommand{\zt}{\zeta}
\newcommand{\U}{U}
\newcommand{\ve}{\varepsilon}
\newcommand{\gm}{\gamma}
\newcommand{\dt}{\delta}
\newcommand{\op}{\oplus}
\newcommand{\la}{\lambda}
\newcommand{\End}{\mathrm{End}}
\newcommand{\rk}{\mathrm{rk}}
\newcommand{\Rm}{\mathrm{R}}
\newcommand{\ad}{\mathrm{ad}}
\newcommand{\Ga}{\Gamma}
\newcommand{\g}{\mathfrak{g}}
\newcommand{\e}{\mathfrak{e}}
\renewcommand{\b}{\mathfrak{b}}
\newcommand{\h}{\mathfrak{h}}
\newcommand{\s}{\mathfrak{s}}
\newcommand{\f}{\mathfrak{f}}
\renewcommand{\o}{\mathfrak{o}}
\newcommand{\eps}{\epsilon}
\newcommand{\p}{\mathfrak{p}}
\renewcommand{\l}{\mathfrak{l}}
\newcommand{\si}{\sigma}
\newcommand{\al}{\alpha}
\renewcommand{\b}{\mathfrak{b}}
\newcommand{\bt}{\beta}
\newcommand{\be}{\begin{eqnarray}}
\newcommand{\ee}{\end{eqnarray}}
\newtheorem{thm}{Theorem}[section]
\newtheorem{propn}[thm]{Proposition}
\newtheorem{lemma}[thm]{Lemma}
\newtheorem{remark}[thm]{Remark}
\newtheorem{definition}[thm]{Definition}
\newcommand{\parag}{\advance\prg by1 {\noindent\bf\thesection.\the\prg\hspace{6pt}}}
\begin{document}
\title{Factorization of Shapovalov elements}
\author{
Andrey Mudrov \vspace{10pt}\\
\\
\small
 St.-Petersburg Department of Steklov Mathematical Institute,\\
\small
27 Fontanka nab,
191023 St.-Petersburg, Russia,
\vspace{10pt}\\
\small
 Moscow Institute of Physics and Technology,\\
\small
9 Institutskiy per., Dolgoprudny, Moscow Region,
141701, Russia,
\vspace{10pt}\\
\small University of Leicester, \\
\small University Road,
LE1 7RH Leicester, UK,
\vspace{10pt}\\
\small e-mail:  am405@le.ac.uk
\\
}

\date{ }

\maketitle

\begin{abstract}

Shapovalov elements $\theta_{\bt,m}$ are special elements in a Borel subalgebra of a classical or quantum
universal enveloping algebra parameterized by a positive root $\bt$ and a positive integer $m$. They relate the
canonical generator of
a reducible Verma module with highest vectors of its Verma submodules.
For $m=1$, they can be explicitly obtained as matrix elements of the inverse Shapovalov form.
We extend this approach to  $m>1$ for all $\bt$ but three roots in $\g_2$, $\f_4$, and $\e_8$, presenting $\theta_{\bt,m}$ as
a product of matrix elements of weight $\bt$.
\end{abstract}

{\small \underline{Key words}:  Shapovalov elements,   Shapovalov form, R-matrix, Verma modules}
\\
{\small \underline{AMS classification codes}: 17B10, 17B37}
 \newpage
 \section{Introduction}
The Berstein-Gelfand-Gelfand representation category $\O$   of semi-simple Lie algebras and quantum groups \cite{BGG1} is
one of the fundamental concepts appearing  in various fields of mathematics and mathematical physics.
In particular, it accommodates finite-dimensional and numerous important infinite dimensional representations like  parabolic modules and
their  generalizations \cite{H}. There are distinguished objects in $\O$ called Verma modules that feature a universality
property: all irreducible modules in $\O$ are their quotients.   The maximal proper submodule
in a Verma module is  generated by extremal vectors \cite{BGG2}, which are invariants
of the positive triangular  subalgebra.
This makes extremal vectors critically important in representation theory.

Extremal vectors in a Verma module are related with a vacuum vector of highest weight  via special elements
$\theta_{\bt,m}$ of the negative Borel subalgebra that are
called Shapovalov elements \cite{Shap,C}. They are parameterized with a positive root $\bt$ and an integer $m\in \N$
validating, in the classical version, the Kac-Kazhdan condition $2(\la+\rho,\bt)-m(\bt,\bt)=0$ on the highest weight $\la$ (with $\rho$
being the half-sum of positive roots). This  condition guarantees that the Verma module is reducible. In the special case when the root $\bt$ is
simple, $\theta_{\bt,m}=f^m_\bt$, where $f_\bt$ is the corresponding Chevalley generator. For compound $\bt$,
the Shapovalov element is a polynomial in the simple root generators with coefficients in the Cartan subalgebra.

Factorization of $\theta_{\bt,m}$ to a product of polynomials of lower degree is convenient both
for their explicit construction and for analysis of their properties. For example it is
good for the study of their classical limit in the case of quantum groups, which is
crucial for quantization of conjugacy classes and their equivariant vector bundles  \cite{M2}.

A description of extremal vectors in Verma modules over Kac-Moody algebras is available in   \cite{MFF}
via a special calculus of polynomials with complex exponents. An inductive construction of extremal vectors in the
case of quantum groups was suggested in  \cite{KL}.
In a factorized form $\theta_{\bt,m}=\theta_{\bt,1}^m$, Shapovalov elements for classical Lie algebras are presented in \cite{Zh},  with the help of
extremal projectors \cite{AST}. While  Zhelobenko's construction gives an answer in the case of  classical simple Lie algebras,
 there remains a problem of explicit description of the structure of factors.

We suggest an alternative approach to the  problem based on a contravariant  bilinear form on Verma modules.
It is not as universal as Zhelobenko's, but has
its own advantages as it gives explicit expressions for Shapovalov elements in a factorized form in
almost all cases. There are only three roots in the exceptional Lie algebras $\g_2$, $\f_4$, and $\e_8$ that
are not covered by our method in its current version.

Extremal vectors generate the kernel of a canonical contravariant form on a Verma module,
which is a specialization of the "universal" Shapovalov form on the Borel subalgebra \cite{Shap}.
This form itself is extremely important and has numerous applications, see e.g. \cite{ES,EK,FTV,AL}.
For a generic weight, the Verma module is irreducible and  the form
is non-degenerate. The inverse form gives rise to an element $\Sc$
 of extended tensor product of positive and negative subalgebras of the (quantized) universal
 enveloping algebra.
 Sending the positive leg of $\Sc$ to a representation yields a matrix with entries in the negative
 subalgebra which we call   Shapovalov matrix.

Our approach consists in  relating $\theta_{\bt,m}$ with entries
of the   Shapovalov matrix, which is explicitly known for all
classical and quantum groups. It was obtained in \cite{M1} in  generalization of Nagel-Moshinski
expression for raising and lowering operators of $\s\l(n)$ \cite{NM}. It can also be
derived (in the quantum setting) from the ABRR equation on the dynamical twist \cite{ABRR,ES}.
 Our method provides not only factorization of $\theta_{\bt,m}$
to a product of (possibly shifted) $\theta_{\bt,1}$ but also an efficient recipe for description
of $\theta_{\bt,1}$ in a very elementary way, by a generalized Nagel-Moshinsky rule.

Our approach is absolutely parallel for a classical semi-simple Lie algebra $\g$
and its Drindeld-Jimbo quantum group. The classical case can be done directly or obtained as the limit case $q\to 1$
of the deformation parameter. Let us describe the  method in more detail.

With a finite dimensional module $V$ and a pair of non-zero
vectors $v,f_\bt v\in V$
we associate a Shapovalov matrix element  which belongs to the negative Borel subalgebra rationally extended over
 the Cartan subalgebra.
Under   certain assumptions  on $V$ and $v$, such  matrix elements deliver factors in $\theta_{\bt,m}$.
These factors normalize positive root vectors of the semi-simple subalgebra $\l\subset \g$
whose negative counterparts annihilate $v$. This way they become lowering operators in the Mickelsson algebras of the pair $(\g,\l)$,
\cite{Mick}.

The vector  $f_\bt v$ determines a
homomorphisms $V_{\la_2}\to V\tp V_{\la_1}$, where $V_{\la_i}$ are irreducible Verma modules of
highest weights $\la_i$ and $\la_2-\la_1$ is  the weight of $f_\bt v$.
Factorization of $\theta_{\bt,m}$ follows from factorization of the matrix element
of  the pair $v^{\tp m}$, $(f_\bt v)^{\tp m}$, and from the chain of homomorphisms
$$
V_{\la_{m}}\to V\tp V_{\la_{m-1}}\to \ldots \to  V^{\tp m}\tp V_{\la_0}.
$$
The vector  $v$ should be highest for the support of $\bt$ (the minimal simple subalgebra in $\g$ that accommodates $\bt$) and generate a $2$-dimensional submodule
of the subalgebra generated
by the root spaces $\g_{\pm \bt}$. These conditions are feasible for all $\bt$ but the three exceptional roots mentioned above.

As a result, we obtain $\theta_{\bt,m}(\la)$ as a product  $\prod_{i=0}^{m-1}\theta_{\bt,1}(\la_i)$ with $\la_0=\la$.
The factors $\theta_{\bt,1}$ can be calculated   by
the generalized Nagel-Moshinsky rule (\ref{norm_sing_vec}); that is done in the last section of the paper.
Viewed as an element of the Borel subalgebra,
$\theta_{\bt,m}$ becomes a product of shifted $\theta_{\bt,1}$, by the weight of $v$.
This shift degenerates to trivial
if $\bt$ contains a
simple root $\al$ of the same length with multiplicity $1$.
In this case $v$ can be chosen of weight $\omega_\al$, where $\omega_\al$ is
the corresponding fundamental generator of the weight lattice.
In that case, $\theta_{\bt,m}$ becomes a power of $\theta_{\bt,1}$.
The element $\theta_{\bt,1}$ is a Mickelsson generator for the pair $(\g,\l)$,
where  simple roots of $\l$
are complementary to $\al$. Different $\al$
results in different presentations.

Except for the last section, we present only the $q$-version of the theory. The classical case can be obtained by sending $q$ to $1$.
The expression for $\theta_{\bt,1}$ is  greatly simplified for $q=1$, so we give a special consideration to this case in
the last section.

\section{Preliminaries}

\label{SecPrelim}

Let  $\g$ be a  simple complex Lie algebra and  $\h\subset \g$ its Cartan subalgebra. Fix
a triangular decomposition  $\g=\g_-\op \h\op \g_+$  with  maximal nilpotent Lie subalgebras
$\g_\pm$.
Denote by  $\Rm \subset \h^*$ the root system of $\g$, and by $\Rm^+$ the subset of positive roots with basis $\Pi$
of simple roots. The basis $\Pi$ generates a root lattice $\Ga\subset \h^*$ with the positive semigroup $\Ga_+=\Z_+\Pi\subset \Ga$.

Choose an $\ad$-invariant form $(\>.\>,\>.\>)$ on $\g$, restrict it to $\h$, and transfer to $\h^*$ by duality.
For every $\la\in \h^*$ there is   a unique element $h_\la \in \h$ such that $\mu(h_\la)=(\mu,\la)$, for all $\mu\in \h^*$.
For a non-zero $\mu\in \h^*$  set  $\mu^\vee=\frac{2}{(\mu,\mu)}\mu$ and  $h_\mu^\vee=\frac{2}{(\mu,\mu)}h_\mu$.

Let  $\omega_\al$, $\al \in \Pi$ denote fundamental weights  determined by equations
$(\omega_\al, \bt^\vee)=\dt_{\al,\bt}$, for all $\al, \bt \in \Pi$.

Fix a non-zero complex number $q$ that is not a root of unity and
set  $[z]_q=\frac{q^{z}-q^{-z }}{q-q^{-1}}$ for $z\in \h+\C$.
The standard Drinfeld-Jimbo quantum group $U_q(\g)$ was introduced in \cite{D1,J}. It is a complex Hopf algebra with the set of generators $e_\al$, $f_\al$, and $q^{\pm h_\al}$, $\al \in \Pi$, satisfying relations
$$
q^{h_\al}e_\bt=q^{ (\al,\bt)}e_\bt q^{ h_\al},
\quad
[e_\al,f_\bt]=\dt_{\al,\bt}[h_\al]_q,
\quad
q^{ h_\al}f_\bt=q^{-(\al,\bt)}f_\bt q^{ h_\al},\quad \al, \bt \in \Pi.
$$
The elements $q^{h_\al}$ are invertible, with $q^{h_\al}q^{-h_\al}=1$, while  $\{e_\al\}_{\al\in \Pi}$ and $\{f_\al\}_{\al\in \Pi}$
also satisfy quantized Serre relations, see \cite{ChP} for details.

A Hopf algebra structure on $U_q(\g)$ is introduced by the comultiplication
$$
\Delta(f_\al)= f_\al\tp 1+q^{-h_\al}\tp f_\al,\quad \Delta(q^{\pm h_\al})=q^{\pm h_\al}\tp q^{\pm h_\al},\quad\Delta(e_\al)= e_\al\tp q^{h_\al}+1\tp e_\al
$$
set up on the generators and extended as an algebra  homomorphism $U_q(\g)\to U_q(\g)\tp U_q(\g)$.
The antipode is an algebra anti-automorphism of $U_q(\g)$ that acts on the generators by the assignment
$$
\gm( f_\al)=- q^{h_\al}f_\al, \quad \gm( q^{\pm h_\al})=q^{\mp h_\al}, \quad \gm( e_\al)=- e_\al q^{-h_\al}.
$$
The counit homomorphism $\eps\colon U_q(\g)\to \C$ returns
$$
\eps(e_\al)=0, \quad \eps(f_\al)=0, \quad \eps(q^{h_\al})=1.
$$
Denote by $U_q(\h)$,  $U_q(\g_+)$, and $U_q(\g_-)$  subalgebras   in $U_q(\g)$
 generated by $\{q^{\pm h_\al}\}_{\al\in \Pi}$, $\{e_\al\}_{\al\in \Pi}$, and $\{f_\al\}_{\al\in \Pi}$, respectively.
The quantum Borel subgroups are defined as $U_q(\b_\pm)=U_q(\g_\pm)U_q(\h)$; they are Hopf subalgebras in $U_q(\g)$.
We will also need their extended version $\hat U_q(\b_\pm)=U_q(\g_\pm)\hat U_q(\h)$, where
$\hat U_q(\h)$ is the ring of fractions of $U_q(\h)$ over the multiplicative system generated by
$[h_\al-c]_q$ with $\al \in \Gamma_+$ and $c\in \Qbb$.

We extend the notation $f_\al$, $e_\al$ to all  $\al\in \Rm^+$ meaning the Lusztig root vectors
with respect to some normal ordering of $\Rm^+$, \cite{ChP}. They are known to generate a Poincare-Birkhoff-Witt (PBW)
basis in $U_q(\g_\pm)$.

Given a $U_q(\g)$-module $V$, a non-zero vector $v$ is said to be of weight $\mu$ if $q^{h_\al}v=q^{(\mu,\al)} v$ for all $\al\in \Pi$.
The linear span of such vectors is denoted by $V[\mu]$.
A module $V$ is said to be of highest weight $\la$ if it is generated by vector $v\in V[\la]$ that
is killed by all $e_\al$. The vector $v$ is called highest; it is defined up to a non-zero scalar multiplier.

We consider an involutive coalgebra anti-automorphism and algebra automorphism $\si\colon U_q(\g)\to U_q(\g)$ setting it on the
generators by the assignment
$$\si\colon e_\al\mapsto f_\al, \quad\si\colon f_\al\mapsto e_\al, \quad \si\colon q^{h_\al}\mapsto q^{-h_\al}.$$
The involution $\omega =\gamma^{-1}\circ \si=\si \circ \gm$ is an algebra anti-automorphism and preserves
comultiplication.

A symmetric bilinear form $(\>.\>,\>.\>)$ on a $\g$-module $V$ is called contravariant if
$\bigl(\omega(x),y\bigr)=\bigl(x,\omega(y)\bigr)$ for all $x,y\in U_q(\g)$.
A module of highest weight has a unique $\C$-valued contravariant form such that the highest vector has squared norm $1$.
We call this form canonical and apply this term to the form on tensor products that is the product of canonical forms on tensor factors.
This is consistent because $\omega$ is a coalgebra map.

Let us recall the definition of the Shapovalov of $U_q(\h)$-valued Shapovalov form on the Borel subalgebra  $U_q(\b_-)$, \cite{Shap}.
Regard $U_q(\b_-)$ as a free right $U_q(\h)$-module generated by $U_q(\g_-)$.
The triangular decomposition $U_q(\g)=U_q(\g_-)U_q(\h)U_q(\g_+)$ facilitates projection
   $\wp\colon U_q(\g)\to U_q(\h)$ along the sum $\g_-U_q(\g)+U_q(\g)\g_+$, where $\g_-U_q(\g)$ and $U_q(\g)\g_+$ are
   right and left ideals generated by the negative and positive generators,
   respectively.
Set
$$
(x,y)=\wp\bigl(\omega(x)y\bigr), \quad x,y\in U_q(\g).
$$
This form is $U_q(\h)$-linear and contravariant.
It follows that the left ideal $U_q(\g)\g_+$ is in the kernel, so the form descends to a form
on the quotient  $U_q(\g)/U_q(\g)\g_+\simeq U_q(\b_-)$.

A Verma module $V_\la$ is an induced module $U_q(\g)\tp_{U_q(\b_+)}\C_\la$,
where $\C_\la$ is the 1-dimensional $U_q(\b_+)$-module that is trivial on $U_q(\g_+)$ and
returns  weight $\la$ on $U_q(\h)$. Its highest vector of weight $\la$  is denoted by  $v_\la$, which
is also called vacuum vector. It freely generates $V_\la$ as a module over $U_q(\g_-)$.

Specialization of the Shapovalov form at $\la\in \h^*$ gives the canonical contravariant $\C$-valued form $(x,y)_\la=\la\Bigl(\wp\bigl(\omega(x)y\bigr)\Bigr)$
on $V_\la$, upon a natural isomorphism $U_q(\g_-)\simeq V_\la$ of $U_q(\g_-)$-modules
generated by the assignment $1\mapsto v_\la$. Conversely, the canonical contravariant form
on $V_\la$ regarded as a function of $\la$ descends to the  Shapovalov form if one views $U_q(\h)$ as
the algebra of polynomial functions on $\h^*$.
By  an abuse of terminology, we also mean by Shapovalov form the canonical
contravariant form on $V_\la$.

It is known that the contravariant form on a Verma module goes degenerate if and only if
its highest weight is in the union of $\Hc_{\bt,m}=\{\la\>|\>q^{2(\la+\rho,\bt)-m(\bt,\bt)}=1\}$ over $\bt \in \Rm^+$ and $m\in \N$, where
$\rho$ is the half-sum of positive roots, \cite{DCK}.
In the classical case $q=1$, $\Hc_{\bt,m}$ becomes  a Kac-Kazhdan hyperplane of weights satisfying $2(\la+\rho,\bt)=m(\bt,\bt)$.

Recall that a vector $v\in V_\la$ of weight $\la-\mu$ with $\mu\in \Gamma_+$, $\mu\not =0$, is called extremal if $e_\al v=0$ for all $\al \in \Pi$.
We call its image under the isomorphism $V_\la\to U_q(\g_-)$ a  Shapovalov element. Extremal vectors   are
in the kernel of the contravariant form and generate submodules of the corresponding highest weights.
We will be interested in the special case when $\mu=m\bt$ with $\bt\in \Rm^+$ and $m\in \N$. Then the
highest weight $\la$ has to be in $\Hc_{\bt,m}$.

For simple $\bt$ the Shapovalov element $\theta_{\bt,m}$  is just the $m$-th power of
the root vector, $\theta_{\bt,m}=f_\bt^m$. That is not the case for compound $\bt$.
The goal of this work is to find explicit expressions for $\theta_{\bt,m}$ when  $\bt$ is  compound.

\section{Shapovalov inverse and its matrix elements}
Define the opposite $U_q(\g)$-module $V_\la'$ of lowest weight $-\la$ as follows. The underlying vector
space of $V_\la'$ is taken to be $V_\la$, while the representation homomorphism
$\pi'_\la$ is twisted by $\si$, that is $\pi'_\la=\pi_\la\circ \si$.
The module $V_\la'$ is freely generated over $U_q(\g_+)$ by its lowest vector $v_\la'$.

Let $\si_\la\colon V_\la\to V_\la'$ denote the isomorphism of vector spaces,
$
x v_\la\mapsto \si(x) v_\la', \quad x\in U_q(\g_-).
$
It intertwines the representations homomorphism, $\pi_\la' \circ \si= \si_\la\circ \pi_\la$.
The map $\si_\la$ relates the contravariant form on $V_\la$ with  an invariant pairing
$V_\la\tp V_\la'\to V_\la\tp V_\la \to \C$.

Suppose that the module $V_\la$ is irreducible. Then its invariant pairing is non-generate (as well as the contravariant form on $V_\la$).
The inverse form is an element of a completed tensor product $V_\la'\hat \tp V_\la$.
Under the isomorphisms $V_\la\to U_q(\g_-)$, $V_\la'\to U_q(\g_+)$, it goes to an element that we denote by $\Sc\in  U_q(\g_+)\hat \tp  U_q(\g_-)$
and call universal Shapovalov matrix.
Varying the highest weight $\la$ we get a rational trigonometric dependance of $\Sc$. As a function
of $\la$,  $\Sc$ is  regarded as an element of $U_q(\g_+)\hat \tp \hat U_q(\b_-)$. In other words,
the weight dependance is accommodated by the right tensor leg of $\Sc$.

Given a finite dimensional $U_q(\g)$-module with representation homomorphism
$\pi\colon U_q(\g)\to \End(V)$ the image $S=(\pi\tp \id)(\Sc)$ is a matrix with entries in $U_q(\g_-)$.
One can work directly with a rational trigonometric operator function  $S$  and forget that it came from $\Sc$.

An explicit expression of $S$ in a weight basis  $\{v_i\}_{i\in I}\subset V$ can be given in terms of  Hasse diagram
$\mathfrak{H}(V)$. Such a diagram is
associated with any partially ordered sets. Arrows are simple root vectors $e_\al$ connecting basis elements $v_j\stackrel{e_\al}{\longleftarrow} v_i$
whose weight difference is $\nu_j-\nu_i=\al$.
 We introduce a partial  order on $\{v_i\}_{i\in I}$  by writing $v_i\succ v_j$ if the inclusion $\nu_i-\nu_j\in \Gamma_+\backslash\{0\}$
 holds for their weights.
The matrix $S$ is triangular: $s_{ii}=1$ and $ s _{ij}=0$ if $\nu_i$ is not succeeding $\nu_j$. The entry $s_{ij}$
is a rational trigonometric function $\h^*\to U_q(\g_-)$. Its value carries weight $\nu_j-\nu_i\in -\Gamma_+$.

The matrix $S$ depends only on the $U_q(\b_+)$-module structure on $V$. Therefore, to calculate a matrix
element $s_{ij}$, one can choose a weight basis that extends a weight basis in the submodule $U_q(\g_+)v_j$.
Then, in particular, $s_{ij}=0$ if  $v_i\not \in U_q(\g_+)v_j$.

We  recall a construction of $S$  following \cite{M1}.
Let $\{h_i\}_{i=1}^{\rk \g}\in \h$ be an orthonormal basis. The element $q^{\sum_i h_i\tp h_i}$ belongs to a completion of $U_q(\h)\tp U_q(\h)$
in the $\hbar$-adic topology, where $\hbar  =\ln q$. Choose an R-matrix of $U_q(\g)$ such that $\check \Ru=q^{-\sum_i h_i\tp h_i}\Ru\in U_q(\g_+)\hat \tp U_q(\g_-)$
and set $\Cc=\frac{1}{q-q^{-1}}(\check \Ru- 1\tp 1)$.
The key identity on $\Cc$ that facilitates the $q$-version of the theory is  \cite{M1}
\be
[1\tp e_{\al},\Cc] +(e_{\al}\tp q^{-h_{\al}})\Cc- \Cc (e_{\al}\tp q^{h_{\al}}) =
e_{\al}\tp [h_{\al}]_q, \quad \forall \al \in \Pi^+.
\label{key_id}
\ee
In the classical limit, $\Cc=\sum_{\al\in  \Rm^+} e_\al\tp f_\al$ is the polarized split Casimir without its Cartan part.
One then recovers
\be
[1\tp e_{\al},\Cc]+[e_{\al}\tp 1,\Cc] = e_{\al}\tp h_{\al}
 \label{key_id_cl}
\ee
for each simple root $\al$.

We rectify the Hasse diagram and the partial ordering by removing arrows $v_i\leftarrow v_j$ if $C_{ij}=0$. This will not affect the formula (\ref{norm_sing_vec}) for matrix elements.

For each weight $\mu\in \Gamma_+$ put
\be
\eta_{\mu}=h_\mu+(\mu, \rho)-\frac{1}{2}(\mu,\mu) \in \h\op \C.
\ee
Regard it  as an affine function on $\h^*$ by  the assignment $\eta_\mu\colon \zt \mapsto (\mu,\zt+ \rho)-\frac{1}{2}(\mu,\mu)$, $\zt\in \h^*$.
Observe that $\eta_{m\bt}=m\bigl(h_\bt+(\bt, \rho)-\frac{m}{2}(\bt,\bt)\bigr)$. That is,  $[\eta_{m\bt}(\la)]_q$
vanishes on $\Hc_{\bt,m}$ (and only on the Kac-Kazhdan hyperplane in the classical case).

For a pair of non-zero vectors $v,w\in V$ define a matrix element $\langle w|v\rangle=(w,\Sc_1v)\Sc_2\in \hat U_q(\b_-)$,
where  $\Sc_1\tp \Sc_2$ stands for  a Sweedler-like notation for   $\Sc$ and the pairing is with respect to the canonical contravariant form on $V$.
Its specialization at a weight $\la$ is denoted by $\langle w|v\rangle_\la$.
For each $w$, the map $V\to V_\la$, $v\mapsto \langle v|w\rangle v_\la=\langle w|v\rangle_\la v_\la$ satisfies:
$e_\al\langle  v|w\rangle v_\la=\langle \si(f_\al) v|w\rangle v_\la$ for all $\al\in \Pi$.
This is a consequence of  $\U_q(\g_+)$-invariance of the element $\S(1\tp v_\la)\in U_q(\g_+)\hat \tp V_\la$.

Let $c_{ij}$ denote the entries of the matrix $(\pi\tp \id)(\Cc)$ in an orthonormal weight basis $\{v_i\}_{i\in I}\in V$.
The entries $s_{ij}$ of the matrix $S$ are the matrix elements $\langle v_i|v_j\rangle$.
Fix a "start" node $v_a$ and an "end" node $v_b$ such that $v_b\succ v_a$.
Then the re-scaled matrix element   $\check s_{ab}=-s_{ba}[\eta_{\nu_b-\nu_a}]_qq^{-\eta_{\nu_b-\nu_a}}$
can be calculated by the formula
\be
\check s_{ba}=c_{ba}+\sum_{k\geqslant 1}\sum_{v_b \succ v_k\succ \ldots \succ v_1\succ v_a}
c_{b k}\ldots
c_{1 a}\frac{(-1)^kq^{\eta_{\mu_k}}\ldots q^{\eta_{\mu_1}}}{[\eta_{\mu_k}]_q\ldots [\eta_{\mu_1}]_q} \in \hat U_q(\b_-),
\label{norm_sing_vec}
\ee
where
$
 \mu_l=\nu_{l}-\nu_{a}\in \Gamma_+$, $l=1,\ldots, k.
$
Here the summation is performed over all possible routes from $v_a$ to $v_b$, see \cite{M1} for details.

It is straightforward that $U_q(\g_+)$-invariance of the tensor $\Sc(v_a\tp v_\la)$ implies
\be
e_\al \check s_{ba}(\la)v_\la \propto [\eta_{\nu_b-\nu_a}(\la)]_q \sum_{k}\pi(f_\al)_{kb} s_{ka}(\la)v_\la.
\label{e-al-s_ij}
\ee
It follows that $\check s_{ba}(\la)v_\la$ is an extremal vector in $V_\la$ for $\la$ satisfying  $[\eta_{\nu_b-\nu_a}(\la)]_q=0$ provided
\begin{enumerate}
  \item $\check s_{ba}(\la)\not =0$,
  \item $\la$ is a regular point for all $s_{ka}(\la)$ and all  $\al$.
\end{enumerate}
We aim to find an appropriate matrix element for $\theta_{\bt,m}$ that satisfies these conditions.

    Let $V$ be a finite dimensional $U_q(\g)$-module with a pair of weight vectors  $v_a,v_b\in V$ such that $v_a=f_\bt v_b$
    for $\bt \in \Rm^+$.
    We call the triple $(V,v_a,v_b)$ a  $\bt$-representation.

\begin{propn}
\label{Shapdeg1}
  Let $(V,v_a,v_b)$ be a $\bt$-representation for  $\bt \in \Rm^+$. Then for generic $\la\in \Hc_{\bt,1}$
   the vector $\check s_{ba}(\la)v_\la\in V_\la$ is extremal.
\end{propn}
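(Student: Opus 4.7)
The plan is to verify for generic $\la\in\Hc_{\bt,1}$ the two sufficient conditions for extremality stated right after (\ref{e-al-s_ij}): (1) $\check s_{ba}(\la)\neq 0$, and (2) $\la$ is a regular point of every $s_{ka}(\la)$ appearing on the right-hand side of (\ref{e-al-s_ij}). Once these hold, the prefactor $[\eta_{\nu_b-\nu_a}(\la)]_q=[\eta_\bt(\la)]_q$ in (\ref{e-al-s_ij}) vanishes identically on $\Hc_{\bt,1}$ by the Kac--Kazhdan relation, so $e_\al\check s_{ba}(\la)v_\la=0$ for every $\al\in\Pi$, which is precisely the extremality claim.

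For (2), formula (\ref{norm_sing_vec}) makes the singularity structure of $s_{ka}(\la)$ transparent: its poles sit on the hyperplanes $\Hc_{\mu_l,n}$ attached to strictly intermediate weights $\mu_l=\nu_l-\nu_a\in\Gamma_+$ appearing along paths from $v_a$. None of these coincides with $\Hc_{\bt,1}$, so their trace on $\Hc_{\bt,1}$ is a proper closed subvariety, outside of which (2) is automatic.

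For (1), I use the relation $\check s_{ba}=-s_{ba}[\eta_\bt]_qq^{-\eta_\bt}$: the nonvanishing of $\check s_{ba}$ on $\Hc_{\bt,1}$ is equivalent to $s_{ba}(\la)$ having a genuine simple pole there. Such a pole is forced by the inverse-form construction of $\Sc$: along $\Hc_{\bt,1}$ the Shapovalov pairing on $V_\la$ degenerates in a single direction spanned by the extremal vector $\theta_{\bt,1}v_\la$, and the hypothesis $v_a=f_\bt v_b$ makes the pair $(v_a,v_b)$ couple nontrivially to that direction under the canonical pairing on $V$, so the corresponding entry of $\Sc$ blows up. The same conclusion is visible directly from (\ref{norm_sing_vec}): the leading term $c_{ba}$ contributes to the Lusztig PBW expansion a term $\pi(e_\bt)_{ba}\,f_\bt$, and by contravariance of the canonical form together with $v_a=f_\bt v_b$ the scalar $\pi(e_\bt)_{ba}$ is nonzero.

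\emph{Main obstacle.} The delicate step is (1): the nonvanishing of $\check s_{ba}(\la)$ along $\Hc_{\bt,1}$. Either one tracks the simple pole of $s_{ba}(\la)$ on $\Hc_{\bt,1}$ through the inverse-form construction of $\Sc$, or one argues directly from (\ref{norm_sing_vec}) that the $f_\bt$-component of $\check s_{ba}(\la)$ is nonzero. The PBW route has the subtle complication that longer-path contributions, after rewriting products of simple root vectors in PBW order, may themselves generate $f_\bt$-terms with $\la$-dependent coefficients; one must confirm that these cannot conspire to cancel the leading constant coefficient on the single hyperplane $\Hc_{\bt,1}$. In either framing the underlying reason the cancellation does not occur is the same: the extremal vector in $V_\la$ at the Kac--Kazhdan specialization is genuinely nontrivial, and the construction of $\Sc$ as the inverse Shapovalov form reflects that nontriviality faithfully in the $(v_b,v_a)$-entry.
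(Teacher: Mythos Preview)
Your plan coincides with the paper's proof: both verify the two sufficient conditions after (\ref{e-al-s_ij}), and your treatment of condition~(2) is exactly the paper's---the denominators $[\eta_{\mu_l}]_q$ in (\ref{norm_sing_vec}) cut out finitely many null-sets, none equal to $\Hc_{\bt,1}$, so all $s_{ka}$ (and $\check s_{ba}$ itself) are regular at generic points of $\Hc_{\bt,1}$.

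The divergence is in condition~(1). The paper does not go through the inverse-form pole argument at all; it settles the nonvanishing in one line by the PBW observation you mention but do not commit to: the leading summand $c_{ba}$ in (\ref{norm_sing_vec}) is the \emph{only} term carrying the Lusztig root vector $f_\bt$, so it is linearly independent of every remaining summand, and hence $\check s_{ba}(\la)\neq 0$ wherever it is defined---in particular at the generic points of $\Hc_{\bt,1}$ already isolated in step~(2). Your ``main obstacle'' (possible $f_\bt$-contributions from longer-path products after PBW reordering) is precisely what the paper's independence claim dismisses; by leaving that claim open and retreating to a heuristic about the inverse form, your write-up stops just short of the one assertion that actually closes the argument. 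If you want your version to be complete, you should either state and justify that independence claim, or make the pole argument rigorous by showing that the residue of $\Sc_\bt$ along $\Hc_{\bt,1}$ pairs nontrivially with $v_b\otimes v_a$.
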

\begin{proof}
  The factors $\frac{q^{\eta_{\mu_k}}}{[\eta_{\mu_k}]_q}$  in (\ref{norm_sing_vec})  go singular on the union of a finite number of
  the null-sets $\{\la\>|\>[\eta_\mu(\la)]_q=0\}$. None of them coincides with
  the $\Hc_{\bt,1}$, hence $\check s_{ba}(\la)$ is regular at generic $\la\in \Hc_{\bt,1}$.
  By the same reasoning, all $s_{ka}(\la)$ are regular at such $\la$. Finally,
  the first term  $c_{ba}$ (and only this one) involves the Lusztig root vector $f_\bt$, a generator of a PBW basis
  in $U_q(\g_-)$. It is therefore independent of the
  other terms, and  $\check s_{ba}(\la)\not=0$.
\end{proof}

Upon identification of rational $U_q(\g_-)$-valued functions on $\h^*$ with $\hat U_q(\b_-)$ we
conclude that $\check s_{ba}$ is a Shapovalov element  $\theta_{\bt,1}$ and denote it by  $\theta_\bt$.
Uniqueness of extremal vector of given weight implies that all matrix elements $\check s_{ba}$ with $v_a=f_\bt v_\bt$ deliver the same $\theta_\bt$, up to a scalar factor. However, when we aim at $\theta_{\bt,m}$ with $m>1$, we have to choose matrix elements for $\theta_\bt$ more carefully.

It was relatively easy to secure the above two conditions in the case of $m=1$. For higher $m$ we will choose a different strategy:
we will satisfy the first condition by the very construction and bypass a proof of the second with different arguments.

\section{Factorization of Shapovalov elements}

For a positive root  $\bt\in \Pi$ denote by  $\Pi_\bt\subset \Pi$ the set of simple roots entering
the expansion of $\bt$ over the basis $\Pi$ with  positive coefficients.
A simple Lie subalgebra $\g(\bt)\subset \g$ generated by
$e_\al,f_\al$ with $\al\in \Pi_\bt$ is called  support of $\bt$. Its universal enveloping algebra is quantized as a Hopf subalgebra in $U_q(\g)$.
Since $[e_\mu,f_\nu]=0$ for $\nu\in \Pi_\bt$ and $\nu\in \Pi\backslash \Pi_\bt$, we can restrict to $\g=\g(\bt)$  without loss of generality.
\begin{definition}
  Let  $\bt \in \Rm^+$ be a positive root and $(V,v_a,v_b)$ a $\bt$-representation such that   $e_\al v_b=0$ for all $\al \in \Pi_\bt$
   and $(\nu_b,\bt^\vee)=1$. We call such $\bt$-representation admissible.
\end{definition}
 In other words, a triple  $(V,v_a,v_b)$ is  admissible if $v_b$ is the highest vector of a $U_q\bigl(\g(\bt)\bigr)$-submodule in $V$ that generates a 2-dimensional submodule of the $U_q\bigl(\s\l(2)\bigr)$-subalgebra generated by $f_\bt,e_\bt$.
It is clear that if a root has an admissible representation, then one of the fundamental representations is admissible.
It is also clear that $v_a$ and $v_b$ can be included in an orthonormal weight basis in $V$.

Assuming the triple $(V,v_a,v_b)$ admissible, denote by $V^{(m)}$ the irreducible (finite-dimensional) $U_q(\g)$-module with highest weight $m \nu_b$ and highest vector $v_b^m$.
The weight $m\nu_a$ is related with $m v_b$ by the reflection $\si_\bt\colon \bt \to -\bt$ from the Weyl group, hence $\dim V^{(m)}[m\nu_a]=1$. Set $v_a^m=f_\bt^m v_b^m\in V^{(m)}[m\nu_a]$.
This is a non-zero vector.

A factorization for $\theta_{\bt,m}$ we are seeking for is a consequence of the following factorization of Shapovalov matrix elements
in tensor product modules.

\begin{lemma}
\label{matr_factorization}
Let $(V,v_a,v_b)$ be an admissible $\bt$-representation. Then for all $m\in \N$,
  \be
\langle v_b^m| v_a^m\rangle_{\la_0}  = c\langle v_b|v_a\rangle_{\la_{m-1}}\ldots \langle v_b|v_a\rangle_{\la_0}
\label{factor-root-degree}
\ee
where $\la_k=\la+k\nu_a\in \h^*$ for $k \in \Z_+$ and $c$ is a non-zero scalar.
\end{lemma}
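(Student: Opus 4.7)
My plan is to prove the lemma by induction on $m$, with the base case $m=1$ being tautological. The backbone of the inductive step is the chain of $U_q(\g)$-module homomorphisms
\begin{equation*}
V_{\la_m} \xrightarrow{\phi_{m-1}} V\tp V_{\la_{m-1}} \xrightarrow{\mathrm{id}\tp \phi_{m-2}} V^{\tp 2}\tp V_{\la_{m-2}}\to \cdots \to V^{\tp m}\tp V_{\la_0}
\end{equation*}
suggested in the introduction. Each factor $\phi_k\colon V_{\la_{k+1}}\to V\tp V_{\la_k}$ is defined by sending the vacuum $v_{\la_{k+1}}$ to the element $\Sc(v_a\tp v_{\la_k})=\sum_i v_i\tp \langle v_i|v_a\rangle_{\la_k}v_{\la_k}$. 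By the $U_q(\g_+)$-invariance of $\Sc(1\tp v_{\la_k})$ recalled just before (\ref{e-al-s_ij}), this element is $U_q(\g_+)$-singular of weight $\la_k+\nu_a=\la_{k+1}$, so $\phi_k$ is a well-defined homomorphism for generic $\la_k$ (the admissibility condition $(\nu_b,\bt^\vee)=1$ together with the irreducibility argument in Proposition \ref{Shapdeg1} handles the regularity/non-vanishing). Let $\Psi_m\in V^{\tp m}\tp V_{\la_0}$ denote the image of $v_{\la_m}$ under the full composition.

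In parallel I would consider the ``direct'' homomorphism $V_{\la_m}\to V^{(m)}\tp V_{\la_0}$ sending $v_{\la_m}$ to the corresponding invariant $\Sc_{V^{(m)}}(v_a^m\tp v_{\la_0})$, and compose with the embedding $V^{(m)}\hookrightarrow V^{\tp m}$. Here admissibility does essential work: since $v_b$ generates a $2$-dimensional $U_q\bigl(\s\l(2)_\bt\bigr)$-module with $f_\bt v_a=0$, standard $\s\l(2)$-coproduct computation shows that under the diagonal action $f_\bt^{\,m}v_b^{\tp m}$ is a nonzero scalar multiple of $v_a^{\tp m}$ in $V^{\tp m}$. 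Hence, under the identification $v_b^m\leftrightarrow v_b^{\tp m}$, the vector $v_a^m$ corresponds to a scalar multiple of $v_a^{\tp m}$, and the canonical contravariant form on $V^{(m)}$ is the restriction of the one on $V^{\tp m}$.

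The two homomorphisms $V_{\la_m}\to V^{\tp m}\tp V_{\la_0}$ produce two $U_q(\g_+)$-invariants of weight $\la_m$ whose projections onto the $v_b^{\tp m}\tp V_{\la_0}$ component must agree up to a nonzero scalar, by uniqueness of the singular vector at the top (the component along the extremal weight-$m\nu_b$ vector in $V^{\tp m}$). On the direct side, this projection yields, up to the $\s\l(2)$-normalization scalar, $\langle v_b^m|v_a^m\rangle_{\la_0} v_{\la_0}$. On the chain side, one extracts the $v_b$-coefficient layer-by-layer: the first factor gives $\langle v_b|v_a\rangle_{\la_{m-1}}v_{\la_{m-1}}$, and then $\phi_{m-2}$ sends $v_{\la_{m-1}}$ to $\Sc(v_a\tp v_{\la_{m-2}})$, whose $v_b$-projection contributes $\langle v_b|v_a\rangle_{\la_{m-2}}$, and so on. Stringing the layers together produces the product $\langle v_b|v_a\rangle_{\la_{m-1}}\cdots\langle v_b|v_a\rangle_{\la_0}$ acting on $v_{\la_0}$, which is exactly the right-hand side of (\ref{factor-root-degree}).

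The main obstacle is making the iterated $v_b$-projection rigorous. At each intermediate step, the coefficient $\langle v_b|v_a\rangle_{\la_{k}}\in U_q(\g_-)$ of weight $-\bt$ acts on the next tensor factor $V\tp V_{\la_{k-1}}$ through the coproduct, and one must show that the non-primitive pieces of $\Delta\bigl(\langle v_b|v_a\rangle_{\la_k}\bigr)$ together with the lower-weight terms $v_i\neq v_b$ in $\Sc(v_a\tp v_{\la_{k-1}})$ do not recreate a $v_b^{\tp m}$-contribution. This is where the admissibility hypotheses intervene crucially: $v_b$ is highest-weight for $U_q\bigl(\g(\bt)\bigr)$, so no weight-lowering operator acting on a lower weight vector can produce $v_b$, and the correction terms in $\Delta(f_\gamma)$ involving positive root vectors annihilate $v_b$. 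Once this ``diagonal'' contribution argument is in place, the induction closes, with the scalar $c$ absorbing the cumulative $\s\l(2)$-binomial factor $[m]_{q_\bt}!$ arising from the identification $f_\bt^m v_b^{\tp m}=[m]_{q_\bt}!\,v_a^{\tp m}$ and from any normalization of $\Sc(v_a\tp v_{\la_k})$.
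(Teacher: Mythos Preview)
Your strategy (chain of intertwiners, pair against $v_b^{\tp m}$, peel off factors inductively) is exactly the paper's. There is, however, a genuine gap in one step: the claim that a ``standard $\s\l(2)$-coproduct computation'' gives $f_\bt^{\,m} v_b^{\tp m}=[m]_{q_\bt}!\,v_a^{\tp m}$ is unjustified for compound $\bt$. The Lusztig root vector $f_\bt$ is \emph{not} skew-primitive in $U_q(\g)$---its coproduct contains cross terms in $U_q(\b_-)[-\gamma]\tp U_q(\g_-)[-(\bt-\gamma)]$ for $0<\gamma<\bt$---so the diagonal $U_q(\g)$-action of $f_\bt$ on $V^{\tp m}$ is not governed by $U_{q_\bt}(\s\l(2))$ tensor combinatorics, and the constant need not be $[m]_{q_\bt}!$. (Your conclusion $v_a^m\propto v_a^{\tp m}$ happens to be true, since $\nu_a=\si_\bt\nu_b$ is an extremal weight and hence $\dim V^{\tp m}[m\nu_a]=1$; but your reasoning does not establish it.) The paper avoids this issue altogether: it writes the chain image as $\Sc(w_m\tp v_{\la_0})$ with $w_m\in V^{\tp m}[m\nu_a]$ related to $v_a^{\tp m}$ by the $(m{-}1)$-fold dynamical twist, proves the recursion for $\langle v_b^{\tp m}|w_m\rangle_{\la_0}$, and only at the end replaces $w_m$ by its orthogonal projection onto $V^{(m)}$, which lies in the one-dimensional space $V^{(m)}[m\nu_a]$ and is therefore a scalar multiple of $v_a^m$.

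A smaller slip: there are no ``positive root vectors'' in $\Delta(f_\gamma)$; the coproduct lands in $U_q(\b_-)\tp U_q(\g_-)$. The paper handles your layer-by-layer step cleanly via contravariance of the form: it moves the first Sweedler leg $x^{(1)}$ of $x=\langle v_b|v_a\rangle_{\la_{m-1}}$ across as $\omega(x^{(1)})\in U_q(\b_+)$ acting on $v_b^{\tp(m-1)}$; since $v_b$ is highest, only the Cartan (weight-zero) piece of $x^{(1)}$ survives, contributing the scalar $q^{-(\bt,\nu_b)}$ and leaving $x^{(2)}=x$ to act on the Verma side. That is your weight observation (``no lowering operator can produce $v_b$'') made precise.
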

\begin{proof}
We realize $V^{(m)}$ as a submodule in the tensor product $V^{\tp m}$ generated by the highest
vector $v_b^m=v_b^{\tp m}$.
Let $\la$ be
such that all Verma modules $V_{\la_k}$ of highest weights $\la_k$, $k=0,\ldots, m-1$,  are irreducible and
consider a chain of module homomorphisms
$$
V_{\la_{m}}\to V\tp V_{\la_{m-1}}\to \ldots \to  V^{\tp m}\tp V_{\la_0}.
$$
They send the highest vectors  $v_{\la_k}\in V_{\la_k}$ to  the extremal vectors $\Sc(v_a\tp v_{\la_{k-1}})\in V\tp V_{\la_{k-1}}$.
The highest vector $v_{\la_m}$ goes over to   $w_m\tp v_{\la_0}$, where $w_m\in V^{\tp m}$ is   of weight $m\nu_a$. It is related with $v_a^{\tp m}$
by an invertible operator  from $\End(V^{\tp m})$, which is  $m-1$-fold dynamical twist \cite{ES}.

Pair  $\Sc(w_m\tp v_{\la_0})$   with $v_b^{\tp m}$ and calculate $\langle v_b^{\tp m}|w_m\rangle_{\la_0}$:
$$
\bigl(v_b^{\tp (m-1)},    \langle v_b| v_a \rangle_{\la_{m-1}}^{(1)}\Sc_{1} w_{m-1} \bigr) \> \langle v_b| v_a \rangle_{\la_{m-1}}^{(2)}\> \Sc_{2}(\la_{0})
=
\langle v_b| v_a \rangle_{\la_{m-1}}^{(2)}\Bigl\langle \omega\bigl(\langle v_b| v_a \rangle_{\la_{m-1}}^{(1)}\bigr)v_b^{\tp (m-1)}|w_{m-1}\Bigr\rangle_{\la_0},
$$
where we use the Sweedler notation $\Delta(x)=x^{(1)}\tp x^{(2)}\in U_q(\b_-)\tp U_q(\g_-)$ for the coproduct of $x\in U_q(\g_-)$.
Since $yq^{h_\al} v_b=\ve(y)q^{(\al,\bt)}  v_b$ for all $y \in U_q(\g_+)$ and $\al \in \Ga_+$,  we
arrive at
$$
\langle v_b^{\tp m}|w_m\rangle_{\la_0}=q^{-(\bt,\nu_b)}\langle v_b |v_a\rangle_{\la_{m-1}}\langle v_b^{\tp (m-1)}|w_{m-1}\rangle_{\la_0}.
$$
Proceeding by induction on $m$ we conclude that $\langle v_b^{\tp m}|w_m\rangle_{\la_0}$ equals the right-hand side of (\ref{factor-root-degree}),
up to the factor $q^{-m(\bt,\nu_b)}$.
Finally, we replace $w_m$ with its orthogonal projection to $V^{(m)}$, which is proportional to $v_a^m$ because $\dim V^{(m)}[m\nu_a]=1$.
This proves the lemma for generic and hence for all $\la$ where the right-hand side of (\ref{factor-root-degree}) makes sense.
\end{proof}
It follows from the above factorization that the least common denominator of the    extremal vector
$
u=\Sc (v^m_a \tp v_{\la})\in V^{(m)}\tp V_{\la}
$
contains $d(\la)=[\eta_\bt(\la+(m-1)\nu_a)]_q=[(\la+\rho)-\frac{m}{2}(\bt,\bt)]_q$. It comes from the leftmost factor $\langle v_b|v_a\rangle_{\la_{m-1}}$
in the right-hand side of (\ref{factor-root-degree}).
Denote by  $s_{v_b^m,v_a^m}(\la)$ the matrix element $\langle v_b^m| v_a^m\rangle_{\la}$.
Since $d$ divides $[\eta_{m\bt}]_q$, the re-scaled matrix element
$$\check s_{v_b^m,v_a^m}(\la)=c(\la)d(\la)s_{v_b^m,v_a^m}(\la)\propto  \prod_{k=0}^{m-1}\theta(\la_k),
$$ where $c(\la)=-q^{-\eta_{m\bt}(\la_{m-1})}\frac{[\eta_{m\bt}(\la)]_q}{d(\la)}$, is regular and does not vanish at generic $\la \in \Hc_{\bt,m}$
because $d(\la)$ cancels the pole in $\langle v_b|v_a\rangle_{\la_{m-1}}$.
Put  $\check u =d^k(\la) u$, where  $k\geqslant 1$ is the maximal degree of this  pole in $u$. It
is an extremal vector in $V^{(m)}\tp V_\la$ that is regular at generic $\la\in \Hc_{\bt,m}$.

\begin{thm}
\label{Prop_factoriztion}
  For generic $\la\in \Hc_{\bt,m}$,
    $\theta_{m,\bt}(\la) \propto \check s_{v_b^m,v_a^m}(\la)$.
\end{thm}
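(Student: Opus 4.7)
I plan to show that $\check s_{v_b^m,v_a^m}(\la) v_\la$ is a non-zero extremal vector in $V_\la[\la - m\bt]$ at generic $\la\in\Hc_{\bt,m}$. Since extremal vectors of this weight are unique up to scalar at such $\la$, and any such vector is a scalar multiple of $\theta_{\bt,m}(\la) v_\la$, the theorem then follows. Non-vanishing has already been arranged in the discussion preceding the theorem: the factorization of Lemma~\ref{matr_factorization} has a simple pole at $\Hc_{\bt,m}$ that is cancelled by $[\eta_{m\bt}(\la)]_q$. So only extremality requires proof. For this I would apply identity~(\ref{e-al-s_ij}) to the pair $(v_b^m, v_a^m)$ and combine the sum on the right using linearity of the Shapovalov pairing in its first argument:
\[
e_\al\, \check s_{v_b^m,v_a^m}(\la)\, v_\la \;\propto\; [\eta_{m\bt}(\la)]_q\, \langle f_\al v_b^m\,|\,v_a^m\rangle_\la\, v_\la,\qquad \al\in\Pi.
\]
Since $[\eta_{m\bt}(\la)]_q$ has a simple zero on $\Hc_{\bt,m}$, the right-hand side vanishes at generic $\la\in\Hc_{\bt,m}$ provided the auxiliary matrix element $\langle f_\al v_b^m\,|\,v_a^m\rangle_\la$ is regular there.

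\textbf{Main obstacle.} The hardest step is precisely this regularity, and I would establish it via residue analysis of $\Sc$. At a generic point of $\Hc_{\bt,m}$ the only degeneracy of the Shapovalov form is the one-dimensional kernel spanned by $\theta_{\bt,m}(\la) v_\la\in V_\la[\la-m\bt]$; hence $\Sc$ has a simple pole on $\Hc_{\bt,m}$ whose residue is rank one, with right tensor leg proportional to $\theta_{\bt,m}$ and left leg some $X\in U_q(\g_+)$ of weight $m\bt$. Consequently
\[
\mathrm{Res}_{\la\in\Hc_{\bt,m}}\,\Sc(v_a^m\tp v_\la)\;\propto\; X v_a^m \tp \theta_{\bt,m}(\la)\, v_\la,
\]
and $X v_a^m$ has weight $m\nu_a+m\bt=m\nu_b$. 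Since $\dim V^{(m)}[m\nu_b]=1$ we conclude $X v_a^m\propto v_b^m$. Hence the residue of $\langle v_k\,|\,v_a^m\rangle_\la$ on $\Hc_{\bt,m}$ vanishes for every orthonormal weight vector $v_k\in V^{(m)}$ distinct from $v_b^m$; in particular $\langle f_\al v_b^m\,|\,v_a^m\rangle_\la$ is regular there, since $f_\al v_b^m\in V^{(m)}[m\nu_b-\al]$ is orthogonal to $v_b^m$.

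Putting the pieces together, $e_\al\, \check s_{v_b^m,v_a^m}(\la) v_\la=0$ for every $\al\in\Pi$ at generic $\la\in\Hc_{\bt,m}$; thus $\check s_{v_b^m,v_a^m}(\la) v_\la$ is a non-zero extremal vector in $V_\la[\la-m\bt]$, and therefore a scalar multiple of $\theta_{\bt,m}(\la) v_\la$.
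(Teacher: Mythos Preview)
Your approach is valid and differs from the paper's. The paper argues inside the tensor product $V^{(m)}\tp V_\la$: it regularizes the full singular vector $u=\Sc(v_a^m\tp v_\la)$ to $\check u=d^k(\la)u$, observes that the $V_\la$-components of $\check u$ span a $U_q(\g_+)$-submodule of $V_\la$, and uses that on $\Hc_{\bt,m}$ the component at $v_b^m$ vanishes, so the highest-weight surviving component is an extremal vector strictly below $v_\la$. Uniqueness of extremal vectors then forces all intermediate components to vanish and the component at $v_a^m$ to be proportional to $\theta_{\bt,m}(\la)v_\la$; since that component was already shown to be nonzero, $k=1$. Your route stays entirely in $V_\la$: you invoke (\ref{e-al-s_ij}) directly and reduce extremality to regularity of $\langle f_\al v_b^m\,|\,v_a^m\rangle_\la$ on $\Hc_{\bt,m}$. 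The paper's argument is more structural and simultaneously yields that every intermediate matrix element $s_{v_k,v_a^m}$ vanishes on $\Hc_{\bt,m}$; yours is more direct and avoids the tensor-product bookkeeping.

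One correction to your residue step: the kernel of the contravariant form at generic $\la\in\Hc_{\bt,m}$ is not one-dimensional but is the full Verma submodule generated by $\theta_{\bt,m}(\la)v_\la$, so the residue of $\Sc$ is not globally rank one. This does not break your argument, because the only weight blocks of $\Sc$ that enter $\langle v_k\,|\,v_a^m\rangle_\la$ for $v_k\in V^{(m)}$ are those with $\mu=\nu_k-m\nu_a\preceq m\bt$ (every weight of $V^{(m)}$ being $\preceq m\nu_b$). For $\mu\prec m\bt$ the weight space $V_\la[\la-\mu]$ does not meet the submodule $V_{\la-m\bt}$, so those blocks are regular; only at $\mu=m\bt$ is there a simple, rank-one pole. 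Hence $\langle f_\al v_b^m\,|\,v_a^m\rangle_\la$, which lives in the block $\mu=m\bt-\al\prec m\bt$, is indeed regular on $\Hc_{\bt,m}$, and your conclusion stands once you restrict the residue claim to the relevant weight range.
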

\begin{proof}
The vector $\check u$ is presentable as
$$
\check u= v^m_b \tp d^k(\la) v_{\la}+\ldots + v^m_a\tp d^{k-1}(\la)c(\la) \check s_{v_b^m,v_a^m}(\la)v_{\la}.
$$
We argue that
$\check u=   v^m_a\tp c(\la) \check s_{v_b^m,v_a^m}(\la)v_{\la}$ for generic $\la$ in $\Hc_{\bt,m}$, where $d(\la)=0$.
Indeed, the $V_{\la}$-components of $\check u$ span a $U_q(\g_+)$-submodule in $V_\la$ isomorphic to a quotient of the $V^{(m)}$-dual.
A vector of maximal weight in this span is extremal and  distinct from $v_{\la}$. But
$\theta_{\bt,m}(\la)v_{\la}$ is the only, up to a factor,  extremal vector in $V_{\la}$, for generic $\la$.
Therefore $k=1$ and $\theta_{\bt,m}\propto \check s_{v_b^m,v_a^m}$.
\end{proof}
 One can pass to the "universal form" of $\theta_{\bt}$ regarding it as an element of $\hat U_q(\b_-)$.  Then
\be
 \theta_{\bt,m}=  (\tau_{\nu_b}^{m-1}\theta_\bt) \>\ldots \> (\tau_{\nu_b}\theta_\bt)\>\theta_\bt,
\label{factor-root-degre-un}
\ee
where $\tau_\nu$ is an automorphism of $\hat U_q(\h)$ generated by the affine shift of $\h^*$ along the weight $\nu$,
that is,
$(\tau_\nu\varphi)(\mu) = \varphi(\mu+\nu)$, $\varphi\in \hat U_q(\h)$,   $\mu\in \h^*$.
One may ask when the shift is trivial, $\tau_{\nu_b}\theta_{\bt}=\theta_{\bt}$, and   $\theta_{\bt,m}$ is just the $m$-th power of $\theta_\bt$.
\begin{propn}
\label{plain-power}
  Let $\bt$ be a positive root. Suppose that there is $\al\in \Pi$ of the same length as $\bt$ that enters the expansion of $\bt$ over the basis $\Pi$ with multiplicity 1. Then $\theta_{\bt,m}=\theta_{\bt}^m\in U_q(\b_-)$.
\end{propn}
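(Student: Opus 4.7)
The plan is to combine the universal factorization (\ref{factor-root-degre-un}) with the explicit Hasse-diagram formula (\ref{norm_sing_vec}) under a carefully chosen admissible representation. Because (\ref{factor-root-degre-un}) expresses $\theta_{\bt,m}$ as the product $(\tau_{\nu_b}^{m-1}\theta_\bt)\cdots(\tau_{\nu_b}\theta_\bt)\theta_\bt$, it suffices to produce one admissible $\bt$-representation for which $\tau_{\nu_b}\theta_\bt=\theta_\bt$. The hypotheses on $\al$ are tailored so that $\nu_b=\omega_\al$ is feasible: from $(\omega_\al,\gamma)=\delta_{\al,\gamma}(\al,\al)/2$ for simple $\gamma$, the admissibility pairing evaluates to $(\omega_\al,\bt^\vee)=c_\al(\bt)\,(\al,\al)/(\bt,\bt)$, which equals $1$ precisely when $\al$ enters $\bt$ with multiplicity one and has the same length as $\bt$. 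So, after reducing to the support $\g=\g(\bt)$, I would take $V$ to be the fundamental $U_q(\g(\bt))$-module of highest weight $\omega_\al$, let $v_b$ be its highest vector and $v_a=f_\bt v_b$; this is an admissible triple with $\nu_b=\omega_\al$.

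With this choice I would verify that $\tau_{\omega_\al}$ acts trivially on every summand of (\ref{norm_sing_vec}) for $\theta_\bt=\check s_{v_b,v_a}$. The entries $c_{ij}$ are $\la$-independent, so the only Cartan data are the factors $q^{\eta_{\mu_l}}/[\eta_{\mu_l}]_q$, where $\mu_l=\nu_l-\nu_a$ is the weight difference to the $l$-th intermediate vertex of a route from $v_a$ to $v_b$. Since $\tau_{\omega_\al}\eta_\mu=\eta_\mu+(\mu,\omega_\al)=\eta_\mu+c_\al(\mu)(\al,\al)/2$, each such factor is fixed whenever $c_\al(\mu)=0$. The task therefore reduces to showing that $c_\al(\mu_l)=0$ for every intermediate $\mu_l$ on every route.

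The crux is the claim that the $\al$-edge appears only as the terminal edge of any route. In the irreducible $U_q(\g(\bt))$-module $V_{\omega_\al}$, the weight $\omega_\al-\gamma$ with $\gamma\in\Pi_\bt$ occurs if and only if $(\omega_\al,\gamma^\vee)\ge1$, i.e.\ if and only if $\gamma=\al$; otherwise $s_\gamma\omega_\al=\omega_\al$ and the $\gamma$-string through $\omega_\al$ collapses to a single point. Hence the unique vertex covered by $v_b$ has weight $\omega_\al-\al$, and the connecting edge is $e_\al$. Since each arrow in a route raises one simple-root coefficient by $1$ and $c_\al(\nu_b-\nu_a)=c_\al(\bt)=1$, the $\al$-edge is used in every route exactly once, necessarily at the very end, so every intermediate $\mu_l$ satisfies $c_\al(\mu_l)=0$. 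This yields $\tau_{\omega_\al}\theta_\bt=\theta_\bt$ and hence $\theta_{\bt,m}=\theta_\bt^m$ via (\ref{factor-root-degre-un}); the identity holds in $U_q(\b_-)$ because $\theta_{\bt,m}$ does. The main obstacle is precisely this terminal-edge claim, with the rest being a routine shift computation.
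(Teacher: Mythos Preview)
Your proof is correct and follows essentially the same path as the paper: both pick the fundamental module of highest weight $\omega_\al$, use $f_\gamma v_b=0$ for simple $\gamma\neq\al$ to force every intermediate weight $\mu_l$ appearing in (\ref{norm_sing_vec}) to satisfy $c_\al(\mu_l)=0$, and conclude that the shift $\tau_{\nu_b}$ acts trivially on $\theta_\bt$. The paper detours through the identity $\theta_\bt(\la+k\nu_a)=\theta_\bt(\la-k\bt)$ before collapsing the product to $\theta_\bt^m$, whereas you verify $\tau_{\omega_\al}\theta_\bt=\theta_\bt$ directly, but the substance is identical.
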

\begin{proof}
 Let $\l\subset \g(\bt)$ be the semi-simple subalgebra generated by simple root vectors $f_\mu,e_\mu$ with $\mu\not=\al$.
Take for  $V$ the fundamental  module of  highest weight $\omega_\al$.
Put     $v_b$ to be the highest vector and $v_a=f_\bt v_b$. Then $(V,v_a,v_b)$ is an admissible $\bt$-representation
because $(\nu_b, \bt)=(\omega_\al,\al)=\frac{(\al,\al)}{2}=\frac{(\bt,\bt)}{2}$.
We  write (\ref{norm_sing_vec}) as
$$
\theta_\bt(\la)=c_{ba}+\sum_{v_b\succ v_i\succ v_a} c_{bi}s_{ia}(\la).
$$
The highest vector $v_b$ is killed by $\l_-$, therefore the Hasse diagram between $v_a$ and $v_b$ looks
$$
v_b\quad\stackrel{e_{\al}}{\longleftarrow}\quad f_\al v_b\quad \ldots \quad  v_a,
$$
where arrows in  the suppressed part are simple root vectors from  $U_q(\l_+)$.
But then  the only copy of $f_\al$ is in  $c_{ia}$ while all $s_{ia}$ belong to $U_q(\l_-)\hat U_q(\h_\l)$,
the extended Borel subalgebra of $U_q(\l)$.

Finally, since $\Pi_\l$ is orthogonal to $v_b$, we have $(\mu, \nu_a)=-(\mu,\bt)$ for all $\mu\in \Rm^+_\l$.
Therefore
$$
\theta_\bt(\la_k)=\theta_\bt(\la-k\bt)
,\quad
\theta_{\bt,m}(\la)=\prod_{k=0}^{m-1}\theta_\bt(\la-k\bt),
$$
where the product is taken in the descending order from left to right.
This proves the plain power factorization because each $\theta_\bt$ carries weight $-\bt$.
\end{proof}

\begin{remark}
\em
Here are a few comments on the conditions of Lemma \ref{matr_factorization} and Proposition
\ref{plain-power}.
\begin{enumerate}
\item
In the case when $\g$ is one of the four classical types,  there is an admissible representation
for each compound root $\bt\in \Rm^+_\g$.
It can be realized in the minimal fundamental module for all $\bt$ except for a short
root of $\s\o(2n+1)$, in which case the fundamental spin module does the job,
\cite{M2}. The conditions of Proposition (\ref{plain-power}) hold in all these
cases except for a long root of $\s\p(2n)$.
In the latter case, one should take for $V$ the fundamental module of highest weight $\omega_\al$, where $\al$ is the simple long root.

   \item
If $\g$ is simply laced and $\bt$ contains a simple root $\al$ with multiplicity $1$, the fundamental module with highest
weight $\omega_\al$ satisfies  the conditions of Proposition \ref{plain-power}. That covers all roots of $\e_6$ and $\e_7$
and all but the maximal root of $\e_8$. If all multiplicities of simple roots in $\bt$ are 2 or higher
(as in the maximal root in $\e_8$),
there are not admissible $\bt$-representations.
\item
There are three roots that have no admissible representations:
\begin{itemize}
  \item a short root in  $\g_2$ that is a sum of one long  and two short simple roots,

  \item the root $\al_1+2\al_2+3\al_3+2\al_4$ in $\f_4$
  with respect to the enumeration
  $
\begin{picture}(85,15)
   \put(20,2){\circle{2}}  \put(21,2){\line(1,0){18}}
  \put(40,2){\circle{2}}
  \put(52,0){$>$}\put(41,3){\line(1,0){16}}\put(41,2){\line(1,0){16}}
  \put(60,2){\circle{2}}\put(61,2){\line(1,0){18}}
  \put(80,2){\circle{2}}
  \put(18,6){$\scriptstyle{1}$}
  \put(38,6){$\scriptstyle{2}$}
  \put(58,6){$\scriptstyle{3}$}
  \put(78,6){$\scriptstyle{4}$}
  \end{picture}
$,

  \item the maximal root of $\e_8$.
\end{itemize}
In the $\g_2$- and $\f_4$-cases, the only simple root with multiplicity $1$ in $\bt$ is longer than $\bt$.
In the simply laced case of $\e_8$, all simple roots enter maximal $\bt$ with multiplicities $\geqslant 2$.
  \item
It follows that a root may have a few plain power factorizations of its Shapovalov elements.
Say, if $\bt$ is a root of height $k$ for $\g=\s\l(n)$, then
$\theta_{\bt,m}$ admits $k$ (apparently different) presentations. This is in accordance with the results of Zhelobenko whose
presentations are parameterised by normal orderings of positive roots.

\end{enumerate}
\end{remark}
\section{Shapovalov elements of degree 1}
In this section we describe the factor   $\theta_{\bt}$
for a particular admissible $\bt$-representation $(V,v_a,v_b)$.
To a large extent it reduces to description of the relevant part of the Hasse diagram  $\mathfrak{H}(V)$.
We give a complete explicit solution in the classical case.
In the case of  $q\not =1$, we do it up to calculation of the entries of the matrix $\Cc$.

To that end, we need to figure out the Hasse sub-diagram
$\mathfrak{H}(v_b,v_a)\subset\mathfrak{H}(V)$ including all possible routes from $v_a$ to $v_b$.
We argue that $\mathfrak{H}(v_b,v_a)$ can be extracted from a diagram $\mathfrak{H}(\b_-)$
we associate with the adjoint representation $\simeq \g$ as follows.
The nodes of $\mathfrak{H}(\b_-)$ are elements of the Cartan-Weyl basis in $\b_-$:  $h^\vee_\al$, $\al\in \Pi$, and  $f_\mu$, $\mu\in \Rm^+$.
Here $h^\vee_\al=\frac{2}{(\al,\al)}h_\al$ so that $\al(h^\vee_\al)=2$.
Arrows   are $f_\mu \stackrel{e_\al}\longleftarrow f_\nu$  if $[e_\al,f_\nu]\propto f_\mu$ and
$h^\vee_\al \stackrel{e_\al}\longleftarrow f_\al$, $\al\in \Pi$.
For example, in the case  of  $\g=\g_2$ we have
\begin{center}
\begin{picture}(310,70)
\put(155,60){$\b_-$}

\put(35,20){$e_{\al_2}$}
\put(85,10){$e_{\al_2}$}
\put(135,40){$e_{\al_2}$}
\put(185,40){$e_{\al_2}$}
\put(235,40){$e_{\al_1}$}

\put(5,0){$h^\vee_{\al_2}$}
\put(55,0){$f_{\al_2}$}
\put(105,20){$f_{\al_1+\al_2}$}
\put(155,20){$f_{\al_1+2\al_2} $}
\put(205,20){$f_{\al_1+3\al_2}$}
\put(255,20){$f_{2\al_1+3\al_2}$}

\put(10,15){\circle{3}}
\put(60,15){\circle{3}}
\put(110,35){\circle{3}}
\put(160,35){\circle{3}}
\put(210,35){\circle{3}}
\put(260,35){\circle{3}}

\put(55,15){\vector(-1,0){40}}
\put(105,34){\vector(-2,-1){40}}
\put(155,35){\vector(-1,0){40}}
\put(205,35){\vector(-1,0){40}}
\put(255,35){\vector(-1,0){40}}

\put(35,60){$e_{\al_1}$}
\put(85,50){$e_{\al_1}$}

\put(5,40){$h^\vee_{\al_1}$}
\put(55,40){$f_{\al_1}$}

\put(10,55){\circle{3}}
\put(60,55){\circle{3}}

\put(55,55){\vector(-1,0){40}}
\put(105,36){\vector(-2,1){40}}

%\put(55,15){\vector(-1,1){40}}

\end{picture}
\end{center}
where   $\al_1$ is the long simple root and $\al_2$ is short.
This is a part of $\mathfrak{H}(\g)$ without arrows $\h^\vee_\al\leftarrow f_\mu $, $\al\not =\mu$.
For each $\al\in \Pi$ and $\bt\in \Rm^+$, the vector space underlying $\mathfrak{H}(h^\vee_\al,f_\bt)$ is a $\g_+$-module
(a subquotient of $\g$ by submodules generated by all $e_\mu$ and simple  $f_\mu$ with $\mu\not =\al$).

\begin{propn}
\label{Hasse_fund}
  Let  $V$ be a fundamental $\g$-module with highest weight $\omega_\al$ and highest vector $v$.
  Suppose that $(V,f_\bt v, v)$ is   an admissible representation for $\bt \in \Rm^+$. Then
  $\dim V[\mu]=1$ for each weight $\mu$ such that $\omega_\al \succeq \mu\succeq \omega_\al-\bt$.
  Furthermore, the assignment $h^\vee_\al\mapsto v$, $f_\gm\mapsto f_\gm v$, $\gm=\omega_\al-\mu$, induces an isomorphism
  of Hasse sub-diagram $\mathfrak{H}(h^\vee_\al, f_\bt)\subset \mathfrak{H}(\b_-)$ and
  $\mathfrak{H}(v, f_\bt v)\subset \mathfrak{H}(V)$.
\end{propn}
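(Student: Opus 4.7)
The plan is to localize the problem from $V$ to the irreducible $\g(\bt)$-submodule generated by $v$, use admissibility to control weight multiplicities, and finally match the Hasse diagrams arrow by arrow. First, I set $V' := U_q\bigl(\g(\bt)\bigr) v \subseteq V$. By admissibility $v$ is a highest vector for $\g(\bt)$, so $V'$ is the irreducible finite-dimensional $\g(\bt)$-module of highest weight $\omega_\al|_{\h(\bt)}$. Any weight vector of weight $\mu$ with $\omega_\al - \mu \preceq \bt$ is a sum of products $f_{\gm_1}\cdots f_{\gm_k} v$ with $\sum \gm_i = \omega_\al - \mu$; since this sum has support in $\Pi_\bt$, each $\gm_i \in \Rm^+$ must likewise have support in $\Pi_\bt$, so $\gm_i \in \Rm^+\bigl(\g(\bt)\bigr)$. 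The vector therefore lies in $V'$, and $V[\mu] = V'[\mu]$ throughout the interval.

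Admissibility gives $(\omega_\al,\bt^\vee)=1$, where $\bt$ is the highest root of $\g(\bt)$, so $s_\bt\omega_\al = \omega_\al - \bt$ is already extremal in $V'$. I then claim that each weight of $V'$ in $[\omega_\al-\bt,\omega_\al]$ has multiplicity $1$ and is of the form $\omega_\al-\gm$ with $\gm\in\{0\}\cup\Rm^+\bigl(\g(\bt)\bigr)$, $\gm\preceq\bt$; moreover $f_\gm v$ generates the corresponding weight space precisely when $\gm$ lies on some route from $f_\bt$ to $h^\vee_\al$ in $\mathfrak{H}(\b_-)$. Nonvanishing along a route is established inductively on the route length using the bracket identity $e_\mu f_\nu v = [e_\mu,f_\nu] v$ (valid for $\mu\in\Pi_\bt$ because admissibility forces $e_\mu v=0$), combined with $\s\l_2$-ladder arguments along the simple roots of $\g(\bt)$. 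An arrow $f_{\nu-\mu}\stackrel{e_\mu}{\longleftarrow} f_\nu$ in $\mathfrak{H}(\b_-)$ records the Cartan--Weyl relation $[e_\mu,f_\nu]=c_{\mu\nu} f_{\nu-\mu}$ with $c_{\mu\nu}\neq 0$, and the same identity reproduces the corresponding arrow in $\mathfrak{H}(V)$ with the same structure constant whenever both $f_\nu v$ and $f_{\nu-\mu} v$ are nonzero. The terminal arrow $h^\vee_\al\stackrel{e_\al}{\longleftarrow} f_\al$ matches via $e_\al f_\al v = [h_\al]_q v = \bigl[\tfrac{(\al,\al)}{2}\bigr]_q v\neq 0$, anchoring $v$ at the top of $\mathfrak{H}(V)$.

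The main obstacle is the weight-structure claim about $V'$. Since $V'$ need not be genuinely minuscule in the non-simply-laced case (the standard representation of $\s\p_{2n}$ being a prototypical example), the bare relation $(\omega_\al,\bt^\vee)=1$ does not immediately force all weight multiplicities of $V'$ in the interval to be $1$. One settles it either by a case-by-case verification across the Dynkin types for which admissible $\bt$-representations exist (consistent with the three exceptional roots in $\g_2$, $\f_4$, $\e_8$ already excluded in the preceding discussion) or by a direct descent argument showing that every weight in the interval is reached from $\omega_\al$ by a single chain of simple-reflection descents through $\Pi_\bt$. Once this structural step is in hand, the arrow matching reduces to the bracket computations above.
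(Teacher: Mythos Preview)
Your setup is sound---localizing to $V'=U_q\bigl(\g(\bt)\bigr)v$, identifying $V[\mu]=V'[\mu]$ on the interval, and matching arrows via $e_\mu f_\nu v=[e_\mu,f_\nu]v$ all work once multiplicity one is in hand. (A side remark: $\bt$ is not in general the highest root of $\g(\bt)$; e.g.\ $\bt=\al_1+\al_2$ in type $B_2$ has $\g(\bt)=\s\o(5)$ with highest root $\al_1+2\al_2$. This clause is simply wrong, though you never actually use it.) The genuine gap is that you defer the multiplicity-one claim, which you rightly call the main obstacle. Neither proposed resolution is carried out, and the ``direct descent via simple reflections'' cannot succeed as stated: weights in the interval need not be extremal. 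For $\bt=\al_1+2\al_2$ in $\s\o(5)$ acting on $\C^5$, the weight $\omega_1-\al_1-\al_2=0$ lies strictly between $\omega_1$ and $\omega_1-\bt$, is Weyl-fixed, yet has multiplicity one; no chain of simple reflections from $\omega_1$ reaches it. So extremality alone does not decide the matter, and your proof stops short of the key step.

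The paper bypasses this obstacle by a direct commutator argument rather than by appealing to the global weight structure of $V'$. Since $(\omega_\al,\mu^\vee)=0$ for each simple $\mu\neq\al$, one has $f_\mu v=0$ for such $\mu$, and the admissibility relation $(\omega_\al,\bt^\vee)=1$ forces the multiplicity $\ell$ of $\al$ in $\bt$ to satisfy $\ell(\al,\al)=(\bt,\bt)$, hence $\ell\in\{1,2,3\}$. Any monomial in the simple $f_\mu$'s of weight $-\gm$ with $\gm\preceq\bt$ therefore contains at most $\ell$ copies of $f_\al$, and every other letter kills $v$; short commutator manipulations (using also $f_\al^2v=0$ and $f_\gm v=0$ for $\gm\in\Rm^+_\l$ in the $\ell=2$ case) show the result is proportional to $f_\gm v$ when $\gm\in\Rm^+$ and zero otherwise. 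The case $\ell=3$ occurs only in $\g_2$ and is checked by hand. This yields both the multiplicity-one statement and the identification of the weights in the interval, after which your arrow-matching finishes the Hasse-diagram isomorphism.
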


\begin{proof}
It is sufficient to prove it for the case $q=1$ because the weight structure is independent of $q$.
First of all, observe that $f_\gm v\not =0$ once $\gm\preceq \bt$, because $(\gm,\omega_\al)\not =0$.
We are left  to prove that $\psi_\gm v\propto f_\gm v$ for each $\psi\in U(\g_-)$ of weight $\gm\in \Rm^+$.

  Let $\ell\in \N$ be the multiplicity of $\al$ in $\bt$, then $\ell(\al,\al)=(\bt,\bt)$.
  Suppose that $\ell=1$.
  The weight  subspace $V[\omega_\al-\gm]$ with $\omega_\al\succ \omega_\al-\gm\succ \omega_\al-\bt$ is constructed as follows.
  For $\gm=\al$ it is $f_\al v$. Let  $\psi\in U(\g_-) $ be of weight $\gm$ which contains $\al$ with multiplicity $1$.
  Then   $\psi v\propto f_\gm v$
  if $\gm\in \Rm^+$   and  $\psi v=0$ otherwise, because
  all simple root vectors other than $f_\al$ annihilate $v$ and every monomial in $\psi$ can be replaced with a composition of commutators.
  The same is true in the case $\ell=2$.  Indeed, it is sufficient to check it for
   $f_\al \psi_\gm f_\al v$ where $\psi_\gm$ a monomial in $f_\si$, $\si\not =\al$
  of weight $\gm\in \Rm^+$.
  As we already proved, $\psi_\gm f_\al v$ is proportional to $[f_\gm,f_\al]v$. But then
  $f_\al \psi_\gm f_\al v\propto f_\al [f_\gm,f_\al]v=2 f_\al f_\gm f_\al v=  [f_\al,[ f_\gm, f_\al]]v$ because
  $f_\gm v =0= f_\al^2v$.
Finally,
  the case $\ell=3$ occurs only in $\g=\g_3$ (in the roots $\bt=\al_1+3\al_2$ and  $\bt =2\al_1+3\al_2$). Then $\al=\al_2$, $\omega_\al=\al_1+2\al_2$,  $V=\C^7$, and the statement  can be checked directly.

  Thus  there is a linear bijection between $\C h^\vee_\gm + \sum_{\al\preceq\gm\preceq \bt} \g_{-\gm}\subset \g$
  and $\C v  + \sum_{\al\preceq\gm\preceq\bt} \g_{-\gm} v\subset V$ that shifts weights   by $\omega_\al$. It is determined
  by the assignment on the basis as stated.
   It is easy to see that it is an isomorphism
  of $\g_+$-modules $\g(h^\vee_\al, f_\bt)$ and $V(v,f_\bt v)$ which induces an isomorphism
  of the corresponding sub-diagrams.
\end{proof}
Specialization of the formula (\ref{norm_sing_vec}) for $\theta_{\bt}$ in the light of Proposition \ref{Hasse_fund}
requires the knowledge of   matrix  $(\pi\tp \id)(\Cc)\in \End(V)\tp U_q(\g_-)$. We do it for the case of  $q=1$.
Let $\bt\in \Rm^+$ and $\al\in \Pi$ be such as in the above proposition.
For $\nu,\gm\in \Rm^+$, denote by  $C_{\nu,\gm}\in \C$ the scalars such that
$[e_\nu,f_\gm]=C_{\nu, \gm}f_{\gm-\nu}$,
if $\gm-\nu\in \Rm^+$,
$C_{\gm,\gm}=\frac{(\bt,\bt)}{2}\frac{\ell_{\al,\gm}}{\ell_{\al,\bt}}$, and
$C_{\nu,\gm}=0$ otherwise.
Then
$$
(\pi\tp \id)(\Cc)(f_\gm v_b\tp 1)=C_{\gm,\gm}v_b\tp f_\gm+\sum_{\nu\prec \gm} C_{\nu,\gm}f_{\gm-\nu}v_b\tp f_\nu.
$$
for all $\gm$ satisfying $\al\preceq \gm\preceq \bt$.
The formula (\ref{norm_sing_vec}) becomes
\be
\theta_{\bt}=
C_{\bt,\bt} f_{\bt}+\sum_{k\geqslant 1}\sum_{\nu_1+\ldots+\nu_{k+1}=\bt}
(C_{\nu_{k+1},\gm_{k}}\ldots C_{\nu_{1},\gm_0})(f_{\nu_{k+1}}\ldots
f_{\nu_1})\frac{(-1)^k }{\eta_{\mu_k}\ldots \eta_{\mu_1}}.
\label{norm_sing_vec-1}
\ee
The internal summation is performed over all partitions
of  $\bt$ to a sum of positive roots $\nu_i$ subject to the following:
all   $\gm_i=\gm_{i-1}-\nu_i$ for $i=1,\ldots, k$ with $\gm_0=\bt$
are positive roots and $\gm_i\succeq \al$. The weights $\mu_i$ are defined as $\mu_i=\gm_0-\gm_i=\nu_1+\ldots+\nu_i$.
 Note that in the $q\not =1$ case this sum  may involve terms with matrix entries of $\Cc$
whose weights are not roots.

\vspace{10pt}
%\newpage
\noindent
\underline{\large \bf Acknowledgement.}
\vspace{10pt} \\
 This  research   was  supported by a grant for creation
and development of International Mathematical Centers, agreement no.
075-15-2019-16-20 of November 8, 2019, between Ministry of Science and
Higher  Education of Russia and PDMI RAS.
We are grateful to Vadim Ostapenko for stimulating discussions.

%\vspace{20pt}
%\noindent
%\underline{\large \bf Competing interests and Funding disclosure.}
%\vspace{10pt} \\
%In accordance with Taylor \& Francis policy and my ethical obligation as a researcher, I
%declare that the enclosed research does not affect interests of any other party
%and no conflicting interests may arise from its publication.

%\vspace{20pt}
%\noindent
%\underline{\large \bf Data availability}
%\vspace{10pt} \\
%No datasets were generated or analysed during the current study.

 \end{document}